\DeclareMathOperator{\LeastN}{Ln}
\DeclareMathOperator{\bound}{\partial}
\theoremstyle{plain}
\newtheorem{assumption}{Assumption}
\newtheorem{proposition}{Proposition}
\newtheorem{remark}{Remark}
\newcommand{\RR}{{\mathbb R}}
\newcommand{\redd}[1]{}
\title{Pricing Vehicle Sharing with Proximity Information}
\author{
Jakub Mare{\v c}ek$^{1}$\thanks{\tt {jakub.marecek@ie.ibm.com}}, Robert Shorten$^{12}$, Jia Yuan Yu$^{3}$\\[3mm]
$^{1}$ IBM Research, Ireland \\
$^{2}$ University College Dublin, Ireland \\
$^{3}$ Concordia University, Canada
}
\begin{document}

\maketitle

\begin{abstract}
For vehicle sharing schemes, where drop-off positions are not fixed, 
we propose a pricing scheme, 
where the price depends in part on the distance between where a vehicle is being dropped off and where the closest shared vehicle is parked.
Under certain restrictive assumptions, we show that this pricing leads to a socially optimal spread  of the vehicles within a region.
\end{abstract}


\section{Introduction}

The numbers of cars on the roads seem ever increasing, with the consequent 
issues of congestion on the roads, lack of parking space, and pollution.
Car-sharing schemes, which let members use any car in a fleet, 
promise to reduce the issues considerably \cite{millard2005car,firnkorn2011will,Babbage}.
Until the penetration of car sharing in any given area is high enough, however, 
 the distance to travel to pick up a car may be prohibitive.

In car-sharing systems, where the pick-up and drop-off have to be the same, e.g. ZipCar, 
 one may optimise the positions and fix them.
In the so called floating-car systems, e.g. car2go \cite{firnkorn2011will} and Autolib \cite{Autolib}, 
  one may use the shared car for a one-way journey, often without reservations.
Although such flexible, open-ended use is preferable from the users' point of view \cite{schwieger2004international,millard2005car},
  it makes the system more difficult to operate, because of the costs of balancing the spatial distribution of cars,
 which are much higher than in bicycle-sharing schemes.
We hence focus on the question as to how to incentivize the drivers to spread the 
cars more evenly.


It seems appropriate that a component of the price paid for sharing the car 
 should be based on where you drop off the car.
Clearly, one could make the price proportional to the inverse 
  of the distances between where the car is dropped off and each other car.
That, however, would require knowing the positions of all other cars, 
  violating the privacy of their drivers.
We hence propose two approximations, which use use only the set of distances of $k$ cars parked closest to the car dropped off and a distance to the boundary, both of which 
can be obtained using sensing or short-range wireless radio systems to communicate with parked cars in the vicinity.
Actually, in the simplest case, we propose that the component is proportional to the inverse 
  of the distance between where the car is dropped off and the 
 closest car already parked.
In further cases, more elaborate functions and a small number of already parked cars are considered.
Overall, we show that even the use of such local approximations in the pricing leads to a uniform distribution of the cars across a region, under certain assumptions.

Specifically, we study two models of dynamics, where drivers try to minimize the fee incurred, considering the pricing scheme above.
Our main findings are:
\begin{itemize}
\item The socially optimal positions of cars within an polygonal region, 
  as well as the best possible outcome of policies that minimize fees incurred, 
  can be computed.
\item In a certain synchronous approximation of the asynchronous dynamics, the relative positions of cars, whose drivers minimise the proposed fees, provably converge to a limit that is socially optimal. 
\item For other pricing functions considered, the relative positions may converge to a limit, whose cost is far from the social optimum.
\end{itemize}

Notice that these are only first steps towards pricing with full consideration of the complex environment of modern cities, with non-convex road systems, one-way traffic restrictions, and non-uniform and time-varying demands.
For self-driving cars, whose positions can keep changing even after drop-off, 
  the model seems to be rather realistic.
For frequent enough drop-offs, 
  even the behaviour of humans dropping the cars off,
  as incentivized by the pricing scheme,
  may be well-approximated by the model.
Our first results need validation using proper planning techniques, but possibly open up
a large area of research on the interface of revenue management and control.

\section{The Model} 

We consider a finite number $N$ of shared cars. 
Let $Q \subseteq \RR^2$ denote the region where these cars can travel, e.g., the 
metropolitan area the car-sharing scheme is restricted to.
Let $\bound(Q) \subseteq R^2$ denote the boundary of $Q$.
The positions of parked cars are publicly known.
For conveninence, we denote the cars of known positions by integers $K
= \{1, 2, \ldots k\}$.
For a continuous time index $t\in[0,\infty)$
Let $x^u(t) \in Q$ be the position of car $u \in K$ at time $t$.
We write $x(t) = (x^1(t) , \ldots, x^k(t))$ to denote all known position of 
cars at time $t$.
In discrete-time models, for $n = 1, 2, \ldots$,
we use $x^u_n \in Q$ for the position of car $u \in K$ at time $n$ and
$x_n \in Q^k$ to denote the corresponding vector of known positions at time $n$.

For both discrete and continuous time,
we define a common notion of \emph{inconvenience} $U^*(x, u)$ of car $u \in K$ parked at position
$x^u \in Q$ relative to the aggregate position $x \in Q^{k}$:
\begin{align}
\max \left\{ \frac{1}{ \min_{y \in \bound(Q)} ||y - x^u||}, \frac{2}{\min_{v \in K, v \not = u} ||x^u - x^v||} \right\},
\label{eq:pair-wise-sum}
\end{align}
where $\min_{y \in \bound(Q)} ||y - x^u||$ is the distance from $x^u$ to the boundary of $Q$ and
where $|| \cdot ||$ denotes the Euclidean norm.
We think of it as the inconvenience cost of the location of car $u$ to 
the potential participants of 
the shared car scheme. Intuitively, parking a car $u$ near another shared car $v$
incurs a ``redundancy'' cost for not capturing the demand of other
potential customers elsewhere.

\begin{remark}
Although the definition of inconvenience may be seem non-obvious at first, it has some appealing properties.
First, the range of $U^*(x, u)$ is $[0, \infty)$.
Second, it is non-decreasing in the distance to the nearest other car.
Alternatives to inconvenience cost \eqref{eq:pair-wise-sum} such as $-
\min_{v\neq u} d(x^u, x^v)$ for a metric $d$ are not considered in this paper.
\end{remark}

We define the social cost of an aggregate position $x$ as:
\begin{align}
C(x) := \max_{1 \le u \le K} U^*(x, u)
\label{eq:social-cost}
\end{align}
and the cost at the social optimum:
\begin{align}
C^* := \min_{x} C(x).
\label{eq:social-opt}
\end{align}

We propose pricing models, where agent $u$ pays a fee $U(x, u)$ for
dropping their car at $x^u$. This $U(x, u)$ approximates the
inconvenience $U^*(x, u)$, while being 
computable using local data.
The specific functions are described later (c.f. Eq. \ref{eq:neigh} on page \pageref{eq:neigh}).

\subsection*{A Dynamical Model in Discrete Time}

Ultimately, we want to study an asynchronous dynamical system, 
where users respond to the price by changing the positions, where they drop off their car.
This can be seen as a discrete-time problem, with time discretised by the points in time, when a user drops the car off.
Further, we assume the time when a user drops the car off to coincide with a time, when another user picks up a car.

Formally, let us have a function $p_n: \mathbb{Z} \to \{ 1, 2, \ldots, k \}$, which could take the elements of the image in a cyclic order, for example.
Some further examples will be studied experimentally on page \pageref{sec:experimental} below.
Let us also have function Step, which computes the set of minimisers $\arg \min_{x^u_n \in Q} W(x, p_n)$,
picks one,
and scales the difference with respect to $x_n$
uniformly down across all coordinates as little as possible,
such that the Euclidean norm is less than or equal to ${s_\textrm{max}}$,
as detailed in Algorithm~\ref{alg:Step}.

Now, consider a model, where one parked car $p_n$ changes its position from $x^{p_n}_n$ to $x^{p_n}_{n+1}$ at every time step $n \in 1, 2, \ldots$ and the vector of position $x_n$ evolves as:
\begin{align}
x_{n+1} = \begin{cases}
\; x^{k}_n - \textrm{Step}^U_{s_\textrm{max}}\left( { x, p_n } \right), & \textrm{if } k = p_n \\
\; x^{k}_{n} & \textrm{otherwise}
\end{cases}
\label{eq:dynamics2}
\end{align}
where function Step is detailed in Algorithm~\ref{alg:Step}. 
Notice that the cases in \eqref{eq:dynamics2} can be seen as demultiplexing with selector $p_n$.
Further, we define $x_n = x$ to be a fixed-point if and only if $x_{n+1} = x$.

 \begin{algorithm}[t!]
 \begin{algorithmic}[1]
 \item[\bf Input:]
  $x \in \RR^{2k}$ are the $k$ known positions of parked cars in 2D,  
  $p \in K$ is the index of the car dropped off, and 
  ${s_\textrm{max}}$ is the bound on the step-length.
 \STATE $M \gets \arg \min_{x \in Q} U(x, p_n), x^{p_n}_n$, \\ such that only $x^u_n$ changes
 \STATE $m \gets \arg \min_{m \in M} || m || $, \\ which is unique by convexity arguments
 \STATE $D \gets m - x$, where the difference is element-wise
 \STATE find the smallest $c \ge 1$ such that $|| S || / c \le s_\textrm{max}$
 \RETURN $S / c$
 \end{algorithmic}
 \caption{$\textrm{Step}^U_{s_\textrm{max}}\left( { x, p_n } \right)$ function of the discrete-time dynamics \eqref{eq:dynamics2}.}
 \label{alg:Step}
\end{algorithm}


\subsection*{A Continuous-Time Approximation}

We also consider a continuous-time, synchronous model, based on 
\cite{Cortes2005}.
There, all cars move in at the same time, so as to minimise some fee $U(x, u)$ to be paid, which is a function 
of the positions $x$ an the agent $u$, which approximates $U^*(x, u)$.
The dynamics can be formulated as:
\begin{align}
\frac{d}{dt} x(t) & = - \LeastN\left( \partial \left( 
\arg \min_{x \in Q} U(x, u) 
\right) \right) (x(t)). 
\label{eq:dynamics}
\end{align}
where $\LeastN: 2^{\RR^{k}} \to \RR^{k}$ is a mapping that associates a subset
$S$ of $\RR^{k}$ with its least-norm element. 
For a locally Lipschitz function $f$, $\LeastN(\partial f) : \RR^{k} \to \RR^{k}$ is 
a generalized gradient vector field,
given by $x \to \LeastN(\partial f)(x)$. 
Although the $\arg \min_{x \in Q} U(x, u)$ is not guaranteed to be unique for a general $U$,
  Proposition 3.3 \cite{Cortes2005} suggests two functions $U$, for which the 
  $\partial \left( \arg \min_{x \in Q} U(x, u) \right)$ has a closed-form solution.
There, vector $-\LeastN(\partial f)(x)$ is a direction of descent and is guaranteed to
exist as per Theorem 2.4 of \cite{Cortes2005}.
Here, we define $x(t) = z$ to be a fixed point of the above dynamical system if and only if $\frac{d}{dt} x(t) = 0 \quad$.

\begin{remark}
  One should see the continuous-time model as a synchronous approximation of an asynchronous process, 
  in the limit of the number of users parking at one time,  
  and infinitesimally small time step.
  Alternatively, wihtin the calculus on measure chains \cite{hilger1990analysis}, 
  one could formulate both models as dynamics, which differ in the timescale,
  $\mathbb{T = Z}$ and $\mathbb{T = R}$, respectively, and the limit of the
  number of users parking at the same time, being $1$ and $k$, respectively.
  We do not attempt to answer the question of consistency of the approximation 
  of one model by another in this paper, other than in simulations for specific $p_n$,
  which seem encouraging.
\end{remark}


\subsection*{Neighbourhood-Based Pricing $V, W$}
\label{sec:adj}

In this section, we introduce two approximations of $U^*$. 
The approximations are motivated by the fact that computing $U^*$ and the social cost \eqref{eq:social-cost} using pairwise distances  
 and distances to the boundary, for each agent, may be difficult.
The driver may be reluctant to disclose their position, in order for the computation to be performed at a central server, thereby making it necessary to transfer 
 the positions of all parked cars to the driver, which may be time-consuming or costly or both.
The proposed approximation use only the set of distances $D(x,u)$ of $|D(x,u)|$ parked cars closest to $x^u$ and a distance to the boundary.
See a block diagram in Figure~\ref{fig:block}, 
with 
$k$-NN, proj, and V referring to Line 1 of Step and
demux referring to \eqref{eq:dynamics2}.

\begin{figure}[t]
\centering 
\includegraphics[clip=true,trim=1.5cm 0cm 0cm 0cm]{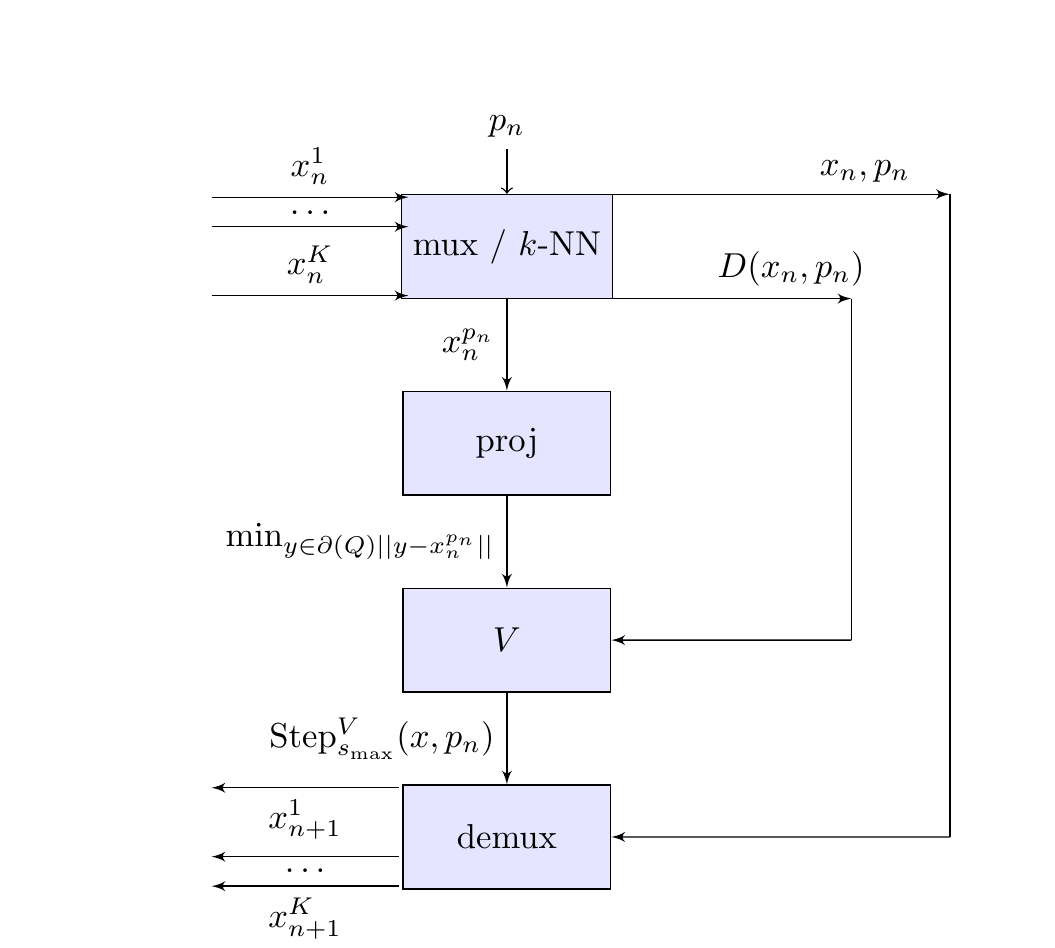}
\caption{A block-diagram for the neighbourhood-based pricing $V$.}
\label{fig:block}
\end{figure}

Two natural approximations of $U^*$ for pricing of dropping car $u$ considering positions $x$ are:
\begin{align}
V(x, u) & := 1/\min \{ \frac{1}{2} \min_{y \in \bound(Q)} ||y - x^u||, \min_{d \in D(x,u)} \{ d \} \}  \\
W(x, u) & := \left( \frac{1}{2} \min_{y \in \bound(Q)} ||y - x^u|| + \sum_{d \in D(x,u)} d \right) ^{-1}.
\label{eq:neigh}
\end{align}
It remains unclear, however, how do the above approaches perform, in theory.
Is one $V$ better than $W$? When? When do they perform as well as $U^*$ using the complete
information?

\section{An Analysis of the Dynamics in Discrete Time}

In order to answer these questions, we study the two models of the dynamics, in turn.
First, the dynamics in discrete time, i.e. 
with $x_n$ generated by \eqref{eq:dynamics2},
it may seem rather difficult to reason about the $\lim_{n \to \infty}
sx_n$. \redd{[this sentence too soft]}
Initially, we hence study the problem of best-possible positions of parked cars within a region,
at any time $n$. 
This corresponds to the problem of of setting the positions in systems, where the pick-up and drop-off have to be the same, e.g. ZipCar,
and provides a lower bound on the best possible outcome achievable using any pricing function $U$.
We make the following assumptions:

\begin{assumption}[Arbitrary Positions] 
At any time, the positions of all cars $u$ are distinct, but otherwise can be changed arbitrarily
within a convex compact polyhedron $Q \subseteq R^d$ with non-empty interior.
\label{as:Holonomic}
\end{assumption}

Then:

\begin{proposition}[Optimum Position]
Under Assumption
\ref{as:Holonomic}, the social optimum $C^*$ as well as the best possible outcome with fee $V$ or $W$,
\begin{align}
\min_{x \in Q} \sum_{u \in K} U^*(x, u), \\
\min_{x \in Q} \sum_{u \in K} V(x, u), \\
\min_{x \in Q} \sum_{u \in K} W(x, u), \\
\label{eq:SocialOptBasic}
\end{align}
can be approximated to any arbitrary precision.
\label{prop:socialopt1}
\end{proposition}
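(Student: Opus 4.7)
The plan is to exploit compactness of $Q$ together with Lipschitz continuity of the three objectives on a suitable restricted domain, and then reduce each problem to a finite grid search whose resolution controls the approximation error.

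First I would observe that although $U^*$, $V$, and $W$ blow up as two cars coincide or as any car approaches $\bound(Q)$, the minimizers are driven away from these degenerate configurations. Concretely, using Assumption~\ref{as:Holonomic} and the fact that $Q$ has non-empty interior, I would exhibit a reference configuration (e.g.\ the $k$ cars placed on a regular pattern inscribed in $Q$) whose objective value is a finite constant $M$ for each of $U^*$, $V$, $W$. Any configuration in which some pairwise distance or some boundary distance falls below a threshold $\delta > 0$ then produces an objective value exceeding $M$ by inspection of the $1/\mathrm{dist}$ terms, so $\delta$ can be chosen (depending only on $Q$ and $k$) so that the infimum over $Q^k$ coincides with the infimum over the compact subset
\begin{equation*}
Q_\delta := \bigl\{ x \in Q^k : \|x^u - x^v\| \ge \delta \text{ for } u \neq v, \; \min_{y \in \bound(Q)} \|y - x^u\| \ge \delta \bigr\}.
\end{equation*}

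Second, on $Q_\delta$ every denominator appearing in $U^*$, $V$, or $W$ is bounded below by $\delta$, so each reciprocal is Lipschitz with constant $O(1/\delta^2)$; taking finite sums or maxima preserves Lipschitz continuity, with a constant $L$ depending on $k$ and $\delta$ only. Compactness of $Q_\delta$ then implies that each of the three objectives attains its minimum on $Q_\delta$.

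Third, given any target precision $\epsilon > 0$, I would cover $Q_\delta$ by a finite $\eta$-net for $\eta = \epsilon/L$, evaluate the objective at each grid point, and return the minimum; by the Lipschitz bound this is within $\epsilon$ of the true optimum. Since $Q$ is a convex compact polyhedron, such a net is explicitly constructible, e.g.\ by intersecting $Q^k$ with a regular axis-aligned grid of spacing $\eta/\sqrt{2k}$.

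The main obstacle is the first step: quantifying a usable $\delta$ requires a constructive ``spread'' configuration inside $Q$ together with a monotonicity argument ensuring that configurations violating the $\delta$-separation are strictly worse than it. Once this geometric reduction to a compact, regular subdomain is in place, the Lipschitz-plus-grid argument is routine, and it applies uniformly to $U^*$, $V$, and $W$ because all three depend on the same pairwise and boundary distances.
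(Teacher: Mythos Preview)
Your approach is sound in spirit and conceptually simpler than the paper's, but it takes a genuinely different route. The paper observes that $U^*$, $V$, and $W$ are each expressible via fractional programming---finite sums of ratios of polynomials in the coordinates, once the $\min$/$\max$ operations and the piecewise distance-to-segment formula are lifted out via auxiliary variables and inequality constraints---and then invokes the moment/SDP hierarchy of Bugarin, Henrion, and Lasserre: the fractional problem is rewritten as a generalized problem of moments over $M(Q)$, and compactness and nonempty interior of $Q$ yield a sequence of semidefinite relaxations $R_r$ with $\inf R_r \to C^*$ (resp.\ the three sums). What the paper's approach buys is a structured, certificate-producing algorithm tied to the polynomial form of the data (and, under further hypotheses, finite convergence and extraction of the minimizer); what yours buys is elementarity, needing only continuity, compactness, and a Lipschitz bound, with no algebraic machinery at all.

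There is one genuine gap in your first step, for $W$. Your claim that any configuration with some pairwise or boundary distance below $\delta$ has objective exceeding $M$ is correct for $U^*$ and $V$, whose denominators are \emph{minima} of distances, but it fails for $W$: the denominator of $W(x,u)$ is a \emph{sum} $(1/2)\min_{y\in\bound(Q)}\|y-x^u\| + \sum_{d\in D(x,u)} d$, so a single small boundary distance does not force $W(x,u)$ to blow up when the neighbor distances stay moderate. Hence the minimizer of $\sum_u W(x,u)$ need not lie in your $Q_\delta$. The repair is easy---restrict instead to the set where each $W$-denominator is at least $\delta$, which is still compact, still contains the minimizer by the same reference-configuration argument, and still gives the $O(1/\delta^2)$ Lipschitz bound---but as written the reduction does not go through for $W$.
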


\begin{proof}[Proof of Proposition \ref{prop:socialopt1}]
Notice that the outcome is the same in both discrete and continous time.
The proof has two steps, which are the same for both discrete and continous time,
and relies heavily on the results of Bulgarin, Henrion, and Lasserre \cite{Bugarin2011}
and the observation that each of $U^*, V$, and $W$ is expressible by fractional programming, where both the variable and constraints are given by finite sums such as $\sum_i \frac{p_i(\cdot)}{q_i(\cdot)}$ and $p_i$ and $q_i$ are polynomials.

First, one needs to formulate an explicit fractional programming problem.
There are two minor complications. First, notice that the boundary of the polygon is defined by a number of line-segments. The distance of a point $x_0^u \in R^2$ to a line segment $L(a, b)$ connecting $a, b \in R^2$ can be computed by a closed form expression involving scalar $p = ((x_0 - a) \cdot (b - a))/||b - a||$, i.e.
\begin{align}
|| x_0^u, L(a, b)|| = 
\begin{cases}
\; ||x_0^u - a|| & \textrm{ if } p < 0 \\
\; ||x_0^u - b|| & \textrm{ if } p > 1 \\
\; ||x_0^u, a + p (b - a)|| & \textrm{ otherwise.}
\end{cases}
\end{align}
Next, notice that the taking of the minima, whose non-decreasing function is to be maximised, 
e.g. for pred or succ in the adjacency model or for the minimum of the distance to the boundary
and the pair-wise distances, 
does not change the problem profoundly, as it can be implemented by the addition of additional variables and constraints, e.g. 
$\max \min \{ a, b \} = \max c \text{ s. t. } c < a, c < b.$

The second step uses the technique of \cite{Bugarin2011}, who observe
that \begin{align}\min_x & \left\{ \frac{p(x)}{ q(x)} \; : \; x \in Q \right\}  \\ = \min_{\mu \in M(Q)} & \left\{ \int p(x) d \mu(x) \; : \; \int_{Q} q(x) d\mu(x) = 1 \right\}, \notag \end{align} where M(Q) is the space of finite Borel measures on 
$Q$.
From Assumption \ref{as:Holonomic}, the set $Q$ is compact and has non-empty interior. 
From the equivalence to a generalised problem of moments \cite{Lasserre2007}, it follows that
if $Q$ is compact and its interior is nonempty, then there exists 
a sequence of semidefinite programming (SDP) relaxations $R_r$ such that 
\begin{enumerate}
\item[(a) ]
$\inf R_r \to S^U \text{ as } r\rightarrow\infty.$
\item[(b) ]
There is no duality gap between $R_r$ and $R_r^{\star}$.
\item[(c) ]
If \eqref{eq:SocialOptBasic} has a unique global minimizer $x^{\star}\in Q$, one can extract $x^{\star}$ from $R_r$ for some $r$.
\end{enumerate}

\end{proof}

\begin{figure*}[!t]
\begin{align}
C & \left(\arg \min_{x \in Q} \max_{u \in K} \max \left\{ \frac{1}{ \min_{y \in \bound(Q)} ||y - x^u||}, \frac{2}{\min_{v \not = u} ||x^u - x^v||} \right\} \right) = \notag \\
C & \left(\arg \max_{x \in Q} \min_{u \in K} \min \left\{ \min_{y \in \bound(Q)} ||y - x^u||, \frac{\min_{v \not = u} ||x^u - x^v||}{2} \right\} \right).
\label{eq:long}
\end{align}
\caption{The min-max exchange discussed in the first part of the proof of Proposition 3.}
\end{figure*}

\begin{remark}
\label{remark:rate}
Under additional assumptions as per Theorem 3.1 of of \cite{Bugarin2011}, 
which are not too restrictive, but non-trivial to formulate, 
one may prove finite convergence. 
Bounds of the rates of convergence may solve a major open problem in mathematical optimization,
though.
Indeed, notice the connection to the planar geometric packing problem \cite{Fowler1981}
and the 
Min-Max Multicenter \cite[ND50]{Garey1979}, 
which are NP-Complete.
\end{remark}

\begin{remark}
Numerically, the social optimum and the best possible outcome given a price function (\ref{eq:SocialOptBasic}) can be computed using GloptiPoly3 \cite{henrion2009}.
Computing the social optimum for more than 30 cars may be a challenge, though,
considering the general fractional programming involved.
In the special case, where the price function is linear in $x$, it suffices to solve a linear program in order to compute the outcome,
but that seems too restrictive.
\end{remark}

Analytically, it is clear: \redd{[What does the following proposition
  say? Can we explain in English?]}
\begin{proposition}
Under Assumption
\ref{as:Holonomic}, 
for any $U(x, u) \not= U^*(x, u) \quad \forall x \in Q^k$, $u \in K$, we have  $C^* \le C \left( \arg \min_{x} \min_{u \in K} U(x, u) \right)$.
For $V$ in the neighbourhood model, we have  $C^* = C \left( \arg \min_{x} \min_{u \in K} V(x, u) \right)$.
\end{proposition}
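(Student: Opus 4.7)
The first inequality is immediate from the definition of $C^\ast$. By construction $C^\ast=\min_{x\in Q^k}C(x)$, so $C^\ast\le C(y)$ for every $y\in Q^k$, and in particular for $y=\arg\min_x\min_{u\in K}U(x,u)$. The hypothesis $U\neq U^\ast$ is never used in this step and only signals the contrast with the equality case treated below.

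For the equality with $V$, the plan is to reduce both $C^\ast$ and $\min_x\min_u V(x,u)$ to the same radius--maximisation problem. The min--max exchange of equation~\eqref{eq:long} already rewrites a minimiser of $C$ as a maximiser of $g(x):=\min_{u\in K}\min\{\min_{y\in\bound(Q)}\|y-x^u\|,\,\tfrac12\min_{v\neq u}\|x^u-x^v\|\}$. Using that $D(x,u)$ always contains the distance to the nearest other car, the reciprocal form of $V$ gives $\min_x\min_u V(x,u)=1/\max_x h(x)$ with $h(x):=\max_{u\in K}\min\{\tfrac12\min_{y\in\bound(Q)}\|y-x^u\|,\,\min_{v\neq u}\|x^u-x^v\|\}$. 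The bulk of the argument is then to show that some maximiser of $h$ is also a maximiser of $g$ and that the resulting configuration attains $C^\ast$.

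The key step is a balance argument. At any maximiser of $g$, the two entries of the inner minimum must coincide for each car that attains the outer minimum: otherwise a small perturbation of that car---allowed freely by Assumption~\ref{as:Holonomic}---could increase the slack entry without hurting the tight one, strictly raising $g$. The equilibrium condition reads $\min_{y\in\bound(Q)}\|y-x^u\|=\tfrac12\min_{v\neq u}\|x^u-x^v\|$. An analogous perturbation argument applied to $h$ yields exactly the same equation for the car selected by the outer maximum, because the factor of $\tfrac12$ sits on the boundary term in $V$ and on the pair--distance term in $U^\ast$, and the balancing step equates them up to this symmetric constant. Appealing to clause~(c) of Proposition~\ref{prop:socialopt1}---the uniqueness of the SDP--extracted optimiser, possibly after a symmetry reduction---then forces the two optimisers to coincide, and $C$ evaluates to $C^\ast$ on $\arg\min_x\min_u V(x,u)$.

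The main obstacle I anticipate is propagating the balance condition from a single car to every car. In the $g$--problem this is automatic, because the outer minimum makes every car critical at the optimum. In the $h$--problem only the maximising car is critical, so I would need to appeal to the uniqueness of the globally optimal packing (Proposition~\ref{prop:socialopt1}) together with a symmetry argument on the convex polygon $Q$ in order to exclude the possibility that the $h$--optimum is an asymmetric outlier in which a single ``queen'' car sits far from the others and deep in the interior. A secondary nuisance is the potential non--uniqueness of the nearest neighbour when several cars tie in distance, which can be handled by an arbitrarily small generic perturbation of the initial configuration.
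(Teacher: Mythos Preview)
The paper gives no proof of this proposition at all: it is introduced with the phrase ``Analytically, it is clear:'' and nothing further. There is even a suppressed author comment (via the \texttt{\textbackslash redd} macro) asking ``What does the following proposition say? Can we explain in English?'', which suggests the authors themselves were uncertain about its precise content. So there is nothing in the paper to compare your argument against beyond the implicit claim of triviality.

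Your treatment of the first inequality is correct and is presumably exactly what the authors had in mind: $C^\ast$ is by definition the minimum of $C$ over all configurations, so it bounds $C$ at any particular configuration, including the one produced by $\arg\min_x\min_u U(x,u)$.

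For the equality with $V$, your proposal is far more elaborate than anything the paper hints at, and the gap you yourself flag is genuine and, I think, fatal to the approach as written. Your reduction correctly turns $\min_x\min_u V(x,u)$ into $1/\max_x h(x)$ with $h(x)=\max_u(\cdots)$, but the outer $\max_u$ is the problem: maximising $h$ rewards making \emph{one} car extremely well placed, which is structurally the opposite of the max--min packing problem underlying $C^\ast$. Your ``queen car'' scenario is not a nuisance to be handled by perturbation---it is the generic behaviour of $\max_x\max_u$, and the balance argument only constrains the single maximising car, not the rest of the configuration. The appeal to clause~(c) of Proposition~\ref{prop:socialopt1} does not rescue this: that clause asserts uniqueness (and extractability) of the minimiser of the \emph{social} problem, but it says nothing about whether the optimiser of the \emph{different} functional $h$ must coincide with it. You would need an independent argument linking the two optimisation problems, and the mismatched placement of the factor $\tfrac12$ in $V$ versus $U^\ast$ that you correctly noticed makes any such link delicate. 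Given that the paper offers no proof and the statement itself is somewhat ambiguously formulated (the $\min_u$ is unusual where one would expect $\max_u$), it may be that a clean proof of the second claim as literally stated is not available.
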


\section{An Analysis of the Dynamics in Continuous Time}

The dynamics in continuous time, i.e. 
$x(t)$ generated by \eqref{eq:dynamics} in continuous time,
provide a seemingly very crude approximation of the discrete-time 
sequence $x_n$, 
Nevertheless, in this continous-time approximation, one can show that $x(t)$ 
\eqref{eq:dynamics} converge to the best possible outcome for $V$.

Let us compare the social optimum corresponding to the inconvenience \eqref{eq:pair-wise-sum}
with the fixed point obtained by agents acting rationally to the price function given by density approximation using the closest other agent.
Under some assumptions, which we unnecessarily strengthen in order to avoid rounding:

\begin{assumption}[Square] There exists an integer $i$ such that there are $k = i^2$ agents
and let the region of Assumption \ref{as:Holonomic} be a unit square.
\label{as:Square}
\end{assumption}

\begin{proposition}[Convergence to the Optimum Position]
Under Assumption
\ref{as:Holonomic} for $d = 2$, 
for any $x_0 \in Q$,
there exists a fixed point $x_{\infty}$ for:
\begin{align}
\frac{d}{dt} x(t) & = - \LeastN\left( \partial \left( 
\arg \min_{x \in Q} V(x, u) 
\right) \right) (x(t)). 
\end{align}
Furthermore, under Assumption~\ref{as:Square}, 
$C^* = C(x_{\infty})$, i.e. the cost \eqref{eq:social-cost} of the outcome of the dynamics \eqref{eq:dynamics} 
given by the rational response to the pricing with respect to the neighbourhood-based price $V$,
converges to the fixed-point $x_{\infty}$, whose cost is at the social optimum \eqref{eq:social-opt}, 
for any $|D| \ge 1$ in finite time.
\label{prop:inSquare}
\end{proposition}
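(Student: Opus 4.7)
The plan is to establish the two assertions separately: first existence of a fixed point $x_\infty$ for the $V$-flow \eqref{eq:dynamics} from any initial $x_0 \in Q$, and then the equality $C(x_\infty) = C^*$ under Assumption~\ref{as:Square}. For existence, I would argue directly via the machinery in \cite{Cortes2005}. Each per-car cost $V(x, u)$ is locally Lipschitz on the interior of $Q^k$, being the reciprocal of a positive, piecewise-smooth function of pairwise distances and distances to the polyhedral boundary $\bound(Q)$. By Theorem~2.4 of \cite{Cortes2005}, the least-norm element $\LeastN(\partial V(\cdot, u))$ is therefore a well-defined descent direction wherever it is nonzero. Taking a Lyapunov candidate $\Phi(x) = \max_{u \in K} V(x, u)$ and using compactness of $Q^k$ together with the descent property, any trajectory has a nonempty $\omega$-limit set, and a nonsmooth version of LaSalle's invariance principle identifies this set with the zeros of $\LeastN(\partial V(\cdot, u))$, which are by definition the fixed points of \eqref{eq:dynamics}.

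For the social-optimum identification under Assumption~\ref{as:Square}, I would first apply Proposition~\ref{prop:socialopt1} together with a direct packing computation to show that on the unit square with $k = i^2$ cars the social optimum is attained at the centered grid $\{((j-\tfrac{1}{2})/i, (\ell-\tfrac{1}{2})/i) : 1 \le j, \ell \le i\}$ with value $C^* = 2i$. I would then exploit the dihedral symmetry of the unit square, together with the symmetry of $V$ under relabelling of the neighbour set $D(x, u)$, to argue that a fixed point of the flow compatible with this symmetry must respect it: at any interior fixed point each car must satisfy a local balance between $\tfrac{1}{2}\min_{y \in \bound(Q)} \|y - x^u\|$ and $\min_{d \in D(x, u)} d$, and with $k = i^2$ the balanced symmetric configuration is forced to be the centered grid. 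Evaluating $C$ at this grid recovers $2i = C^*$.

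Finite-time convergence, asserted for any $|D| \ge 1$, I would obtain via the semi-algebraic structure of $V$: on each cell of the $|D|$-nearest-neighbour Voronoi decomposition it is a rational function of the positions, so a \L{}ojasiewicz inequality with exponent~$1$ applies, and standard results on nonsmooth gradient flows then give arrest in finite time. The main obstacle I expect is the mismatch between per-car selfish descent on $V(\cdot, u)$ and descent on the global social cost $C$: each car minimises only its own $V$, so the induced game-theoretic fixed point is a priori only a Nash rather than a social equilibrium, and could in principle be asymmetric or interior-clustered. Ruling out such configurations and forcing coincidence with the socially optimal grid is the delicate step, and this is where the factor $\tfrac{1}{2}$ on the boundary distance in $V$ and the choice of $|D|$ will need to be used decisively, together with a careful case analysis between the boundary-active regime (where $\tfrac{1}{2}\min_{y \in \bound(Q)} \|y - x^u\|$ is binding in $V$) and the neighbour-active regime (where $\min_{d \in D(x, u)} d$ is binding).
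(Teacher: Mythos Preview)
Your proposal diverges from the paper's argument at the central step, and the route you sketch has a real gap.

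The paper's proof rests on a single reduction that you do not make: observe that, for each agent $u$,
\[
V(x,u) \;=\; \frac{1}{\min\left\{\tfrac{1}{2}\min_{y\in\bound(Q)}\|y-x^u\|,\ \min_{d\in D(x,u)} d\right\}} \;=\; \frac{1}{\mathrm{sm}(u)},
\]
where $\mathrm{sm}(u)$ is exactly the ``smallest inscribed radius'' function of Cort{\'e}s and Bullo \cite{Cortes2005}. Hence the generalised gradient of $V(\cdot,u)$ at $x$ is a positive scalar multiple of $-\partial\,\mathrm{sm}(u)(x)$, and the $V$-descent flow \eqref{eq:dynamics} is, direction for direction, the distributed $\mathrm{sm}$-ascent flow
\[
\dot{x}^u(t) \;=\; -\LeastN(\partial\,\mathrm{sm}(u))(x(t)).
\]
Theorem~2.7 and Proposition~3.5 of \cite{Cortes2005} then give existence of a fixed point and convergence (in finite time) to an incenter configuration for the associated Voronoi partition, i.e.\ the sphere-packing optimum. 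A separate min--max exchange (the display \eqref{eq:long} in the paper) shows that this sphere-packing optimum has social cost $C^*$; under Assumption~\ref{as:Square} one reads off the grid value. The point is that the Nash-versus-social mismatch you worry about disappears once you realise the per-agent $V$-descent is literally the Cort{\'e}s--Bullo coverage flow in disguise.

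Your plan instead tries to analyse the $V$-flow directly and to pin down its fixed points via the dihedral symmetry of the square. This does not close: symmetry of $Q$ constrains only those fixed points that are themselves symmetric, and nothing in your argument forces the $\omega$-limit of an \emph{arbitrary} initial condition $x_0$ to be symmetric. You correctly flag ``asymmetric or interior-clustered'' equilibria as the obstacle, but the proposed remedy (a case analysis on which branch of the $\min$ is active, plus the $\tfrac{1}{2}$ factor) is not an argument --- it is a hope, and in fact it is precisely the content of the Cort{\'e}s--Bullo analysis you would be re-deriving from scratch. Likewise, your finite-time mechanism via a {\L}ojasiewicz inequality is plausible but unnecessary once the reduction is in place: finite-time arrest is part of the cited Cort{\'e}s--Bullo result. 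The missing idea is the one-line identity $V=1/\mathrm{sm}$ and the min--max exchange that aligns the distributed flow with the global objective.
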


\begin{proof}[Proof of Proposition \ref{prop:inSquare}]
The proof has two steps. First, we show the equivalence to another problem, and then we use
the results of Cort{\'e}s and Bullo \cite{Cortes2005} on the equivalent problem.

First, 
we need to show that for distinct positions of agents, we can exchange the min and max
operators, as in \eqref{eq:long}.

Let us start with an easier problem:
\begin{align}
C \left(\arg \min_{x \in Q} \max_{u \not = v \in K} \left\{ \frac{2}{ ||x^u - x^v||} \right\} \right) = \\
C \left(\arg \max_{x \in Q} \min_{u \not = v  \in K} \left\{ \frac{||x^u - x^v||}{2} \right\} \right)
\label{eq:easy}
\end{align}
Let us consider a set $A$ of $k$ distinct vectors in $\RR^2$ and
two distinct vectors $x^u, x^v \in A, x^u \not= x^v$. Clearly:
\begin{align}
\arg \max_{x \in A} || x^u - x^v || = \arg \min_{x \in A} \frac{1}{||x^u - x^v||}
\label{eq:1}
\end{align}
Next, let us consider one of the non-unique:
\begin{align}
(u^*, v^*) \in \arg \min_{u, v \in K, u \not=v} ||x^u - x^v|| \\
           = \arg \max_{u, v \in K, u \not=v} \frac{1}{||x^u - x^v||}
\end{align}
Plug in $(u^*, v^*)$ into \eqref{eq:1} and multiply and divide by a constant as needed. 
Notice that the non-uniqueness stems from the fact the pair of vectors $x^u, x^v$ at the optimum is non-unique,
  but that the distance $||x^u - x^v||$ is the same across the optima, and hence all the optima are equivalent with respect to $C$,
  which considers only the distance $||x^u - x^v||$.
On both left- and right-hand side, we hence obtain optima, which are equivalent with respect to $C$,
as required in \eqref{eq:easy}.
Finally, notice that the extension to the distances to the boundary only enlarges the set of distances, among which to pick the optimum,
where some of the points are projections of vectors $x^u$ onto the boundary and the corresponding 
and the corresponding distances are $\min_{y \in \bound(Q)} ||y - x^u||$.



The rest of the proof is based on the results of Cort{\'e}s and Bullo \cite{Cortes2005} for:
\begin{align}
\arg \max_{x \in Q} \min_{u \in K} \min \left\{ \min_{y \in \bound(Q)} ||y - x^u||, \frac{\min_{v \not = u \in K} ||x^u - x^v||}{2} \right\}.
\end{align}
Let us use the shorthand $\textrm{sm}(u) := \min \left\{ \min_{y \in \bound(Q)} ||y - x^u||, \frac{\min_{v \not = u \in K} ||x^u - x^v||}{2} \right\}$ 
in keeping with the notation of \cite{Cortes2005}.
Notice that the continous-time dynamical system capturing the rational response to 
computing the density based on the distance to the closest car is:

\begin{align}
{\dot x}_i(t) = - \LeastN( \partial \textrm{sm}(u)) (x(t)), 
\end{align}

where $\LeastN$ is a mapping, which associates to each subset $S$ of $R^d$ the an elements of $S$ minimising the norm, as above.
By Theorem 2.7 and Proposition 3.5 of Cort{\'e}s and Bullo \cite{Cortes2005}, the gradient flow corresponding to \eqref{eq:dynamics2} results in the so called ``inscribed circle'' partition, 
whose social cost \eqref{eq:social-cost} is clearly the same as that of the uniform spacing.
\end{proof}

\begin{remark}
Notice that Proposition~\ref{prop:inSquare} holds for any initial distribution of the agents' positions in the square, but the fixed point is not necessarily unique, although all the non-unique fixed points have the same cost  \eqref{eq:social-cost}.
\end{remark}

\begin{remark}
If one were to analyse the discrete-time dynamics \eqref{eq:dynamics2},
one needs to consider the step-size, which makes the analysis considerably more complicated.
Notice that just as in Remark~\ref{remark:rate}, one should not expect strong
results on the rate of convergence, especially for a general $Q$.
\end{remark}

\section{Illustrative Simulations}
\label{sec:experimental}

Further observations can be made using simple discrete-time simulations. 
First, let us consider the consistency of the synchronous approximation 
  of the asynchronous process \eqref{eq:dynamics2}
 in Figure~\ref{fig:uniform}.
We compare the movement of a single car in each step, using functions $p_n$ including:
\begin{itemize}
\item $p^1_n$: pick an element of $\{ 1, 2, \ldots, k \}$ uniformly at random, with no repetion within interval $[ik + 1, (i+1)k]$ for $i = 0, 1, \ldots$ \footnote{
That is:
if $n = ik + 1$ for $i = 0, 1, \ldots$, shuffle $\{ 1, 2, \ldots, k \}$, and fix the order as $O$. In any case, return the element at position $(n \mod k) + 1$ of $O$.
}
\item $p^2_n$: pick an element $\{ 1, 2, \ldots, k \}$ uniformly at random, with repetition
\item $p^3_n := (n \mod k) + 1$, i.e. in in the cyclic order.
\end{itemize} 
with the movement of all vehicles at the same time.
In this example, which uses ``uniformdata'' from Matlab's gallery of matrices,
  the consistency seems near perfect, although we have not provided any guarantees 
  or theoretical justification as to why this should be the case.

Next, let us illustrate the fixed point of the discrete time dynamical system \eqref{eq:dynamics2} described in Proposition~\ref{prop:inSquare} above. 
Notice in Figure~\ref{fig:uniform}, again, that even a small number of steps \eqref{eq:dynamics2}, albeit depending on the number of agents,
suffices to get close to the ``inscribed circle'' partition, which is socially optimal.

\begin{figure*}[tp!]
  \includegraphics[clip=true,width=0.3 \textwidth]{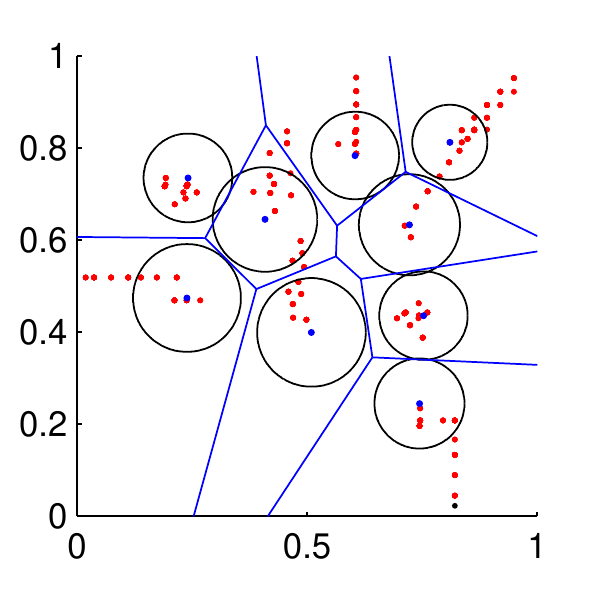}
  \includegraphics[clip=true,width=0.3 \textwidth]{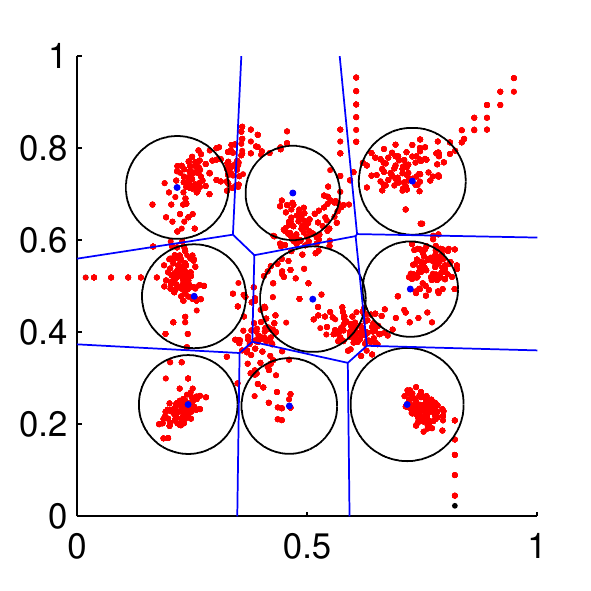}
  \includegraphics[clip=true,width=0.3\textwidth]{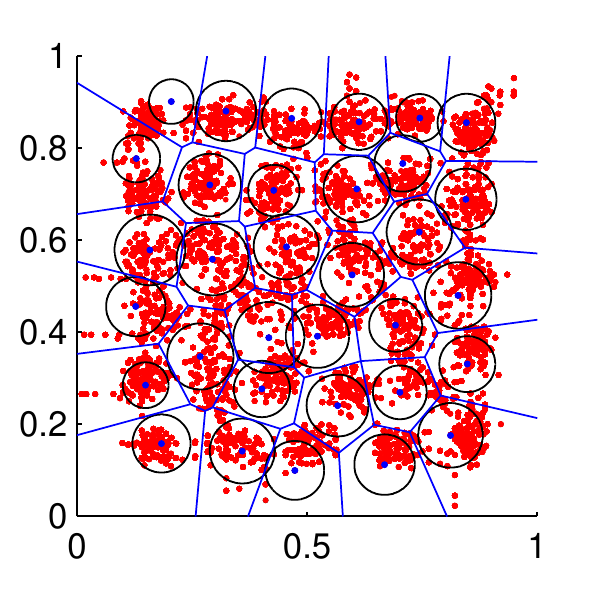}
\\
  \includegraphics[clip=true,width=0.3 \textwidth]{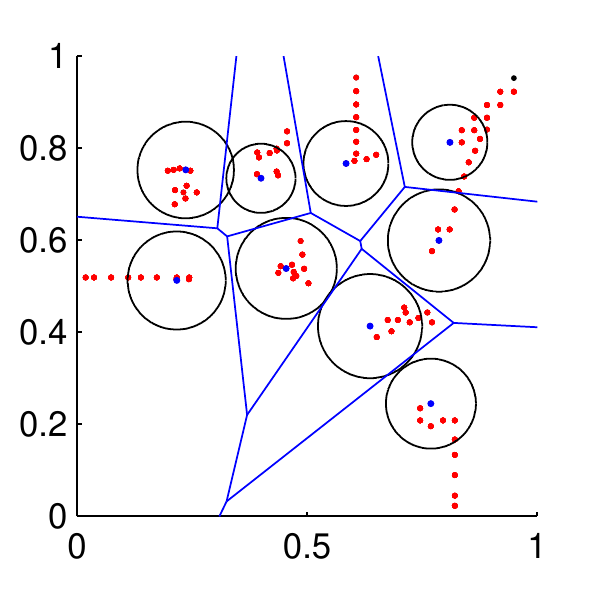}
  \includegraphics[clip=true,width=0.3 \textwidth]{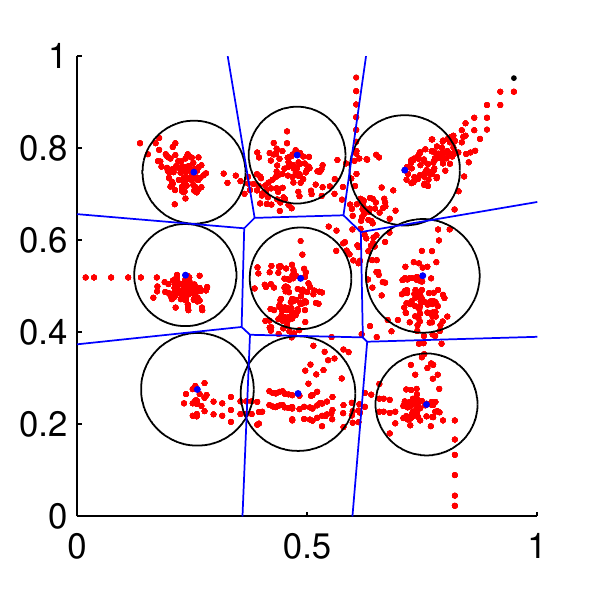}
  \includegraphics[clip=true,width=0.3\textwidth]{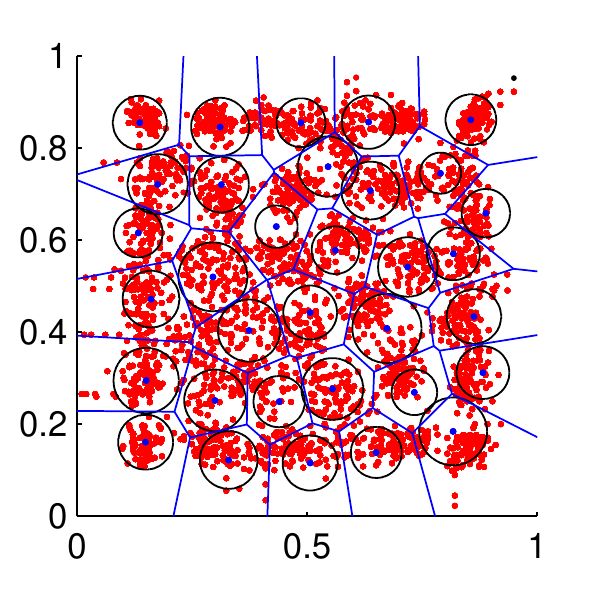}
\\
  \includegraphics[clip=true,width=0.3\textwidth]{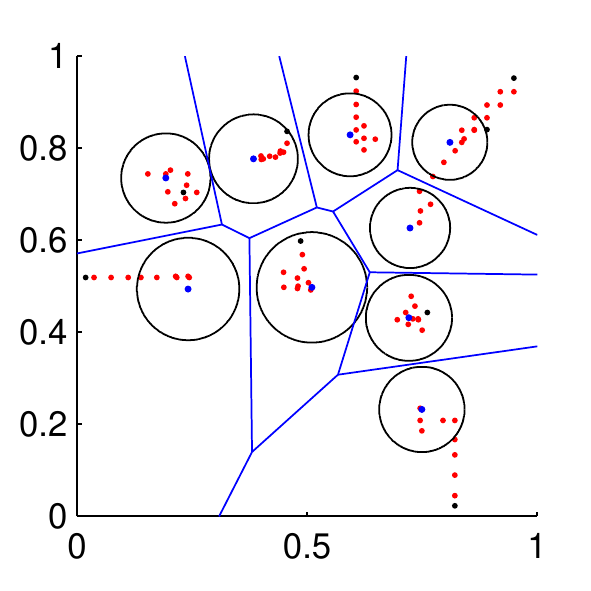}
  \includegraphics[clip=true,width=0.3 \textwidth]{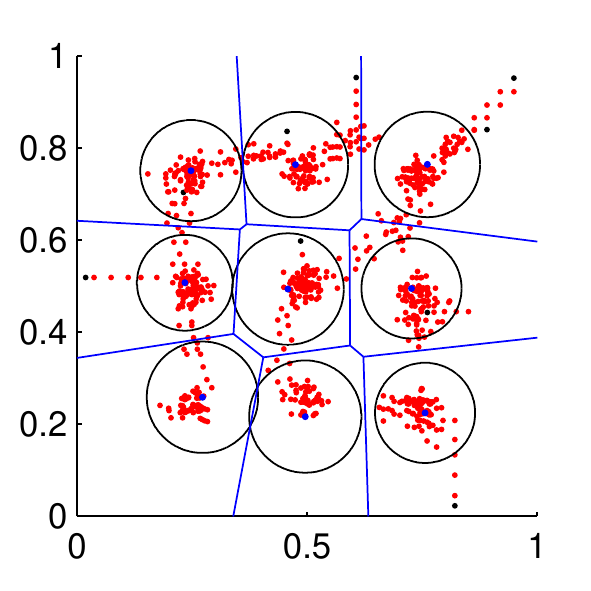}
  \includegraphics[clip=true,width=0.3 \textwidth]{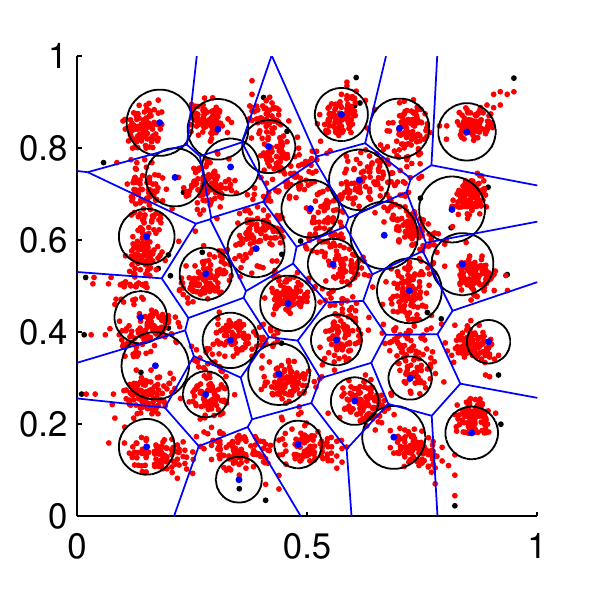}
\caption{The evolution of $x_n$ in a unit square, considering $U^*$ and the bound on the step length $s_u = 0.05$ for all $u \in K$.
The top two rows illustrate the discrete-time dynamics \eqref{eq:dynamics2}, where one driver changes position at any time, according to $p^1$ (top row) or $p^2$ (middle row),
in contrast to the bottom row, where all drivers change position at the same time.
In each row, there are plots
for $k=9$ and $n = 1, 2, \dots 10k$ (left), $k=9$ and $n = 1, 2, \ldots,  100k$ (center),
and $k=33$  and $n = 1, 2, \ldots,  100k$ (right).
In each plot, 
the initial position is marked in black, the final in blue, and positions inbetween in red.
The Voronoi partition corresponding to the final positions is plotted in blue, with the inscribed circles in black.
}
\label{fig:uniform}
\end{figure*}


Next, notice that it is very easy to find examples, where a poor choice of the pricing function 
results in convergence to a fixed point, which is far from social optimum,
or to a non-convergence.
Consider, for instance, Figure~\ref{fig:counterexample}, which again compares the evolution of $x_n$ in a unit square, albeit with 8 instead of 9 points, this time varying the functions used in the neighbourhood-based price 
$V$ and $W$.
Notice that the use of $W$ does not lead to convergence to the socially optimum, even within a unit square, in this example.

\begin{figure*}[tp!]
  \includegraphics[clip=true,width=0.3\textwidth]{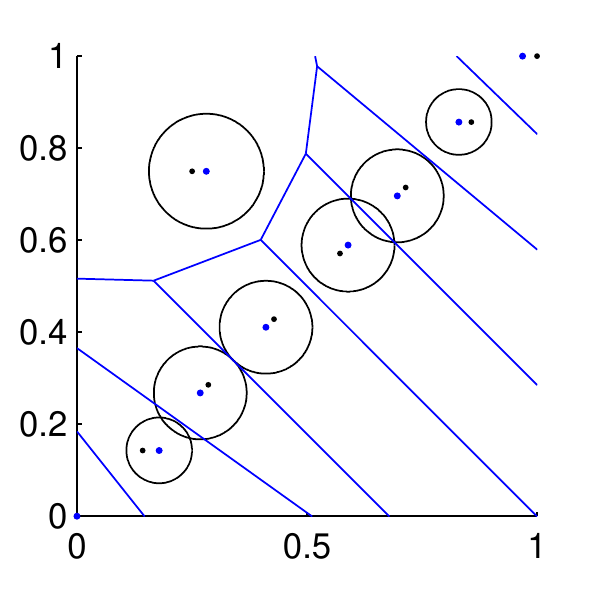}
  \includegraphics[clip=true,width=0.3 \textwidth]{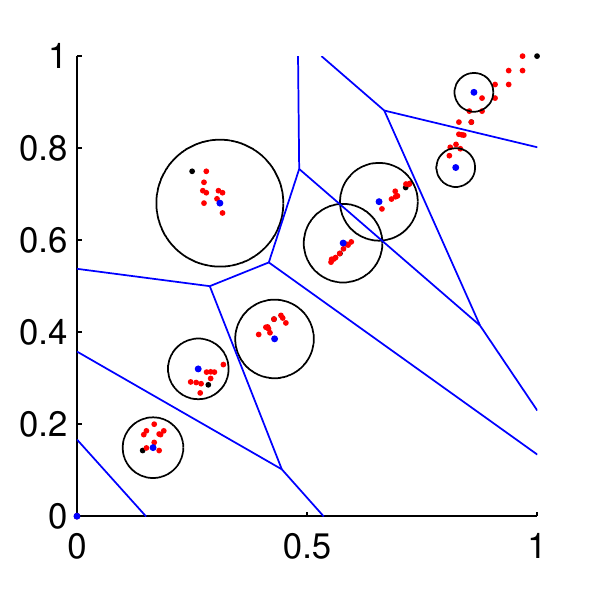}
  \includegraphics[clip=true,width=0.3 \textwidth]{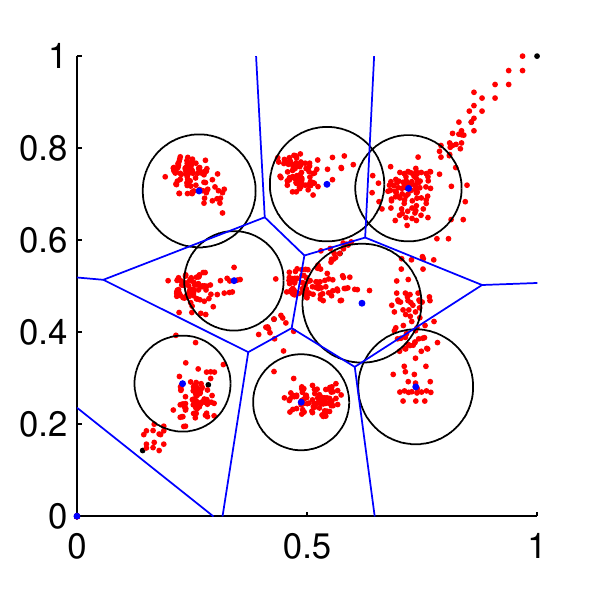}\\
  \includegraphics[clip=true,width=0.3\textwidth]{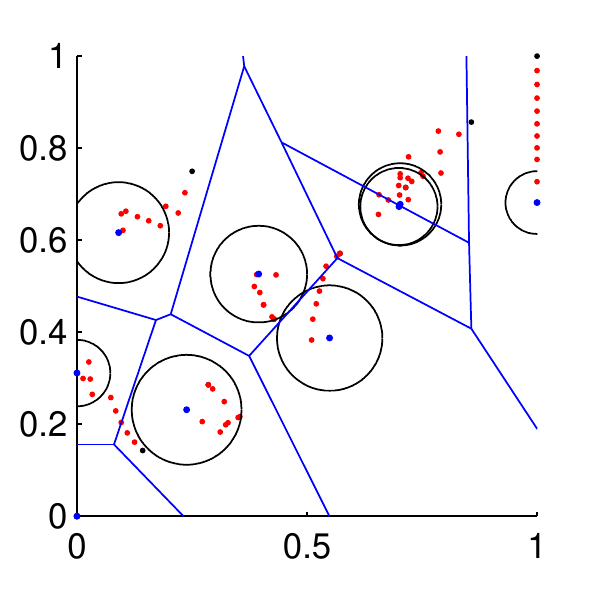}
  \includegraphics[clip=true,width=0.3 \textwidth]{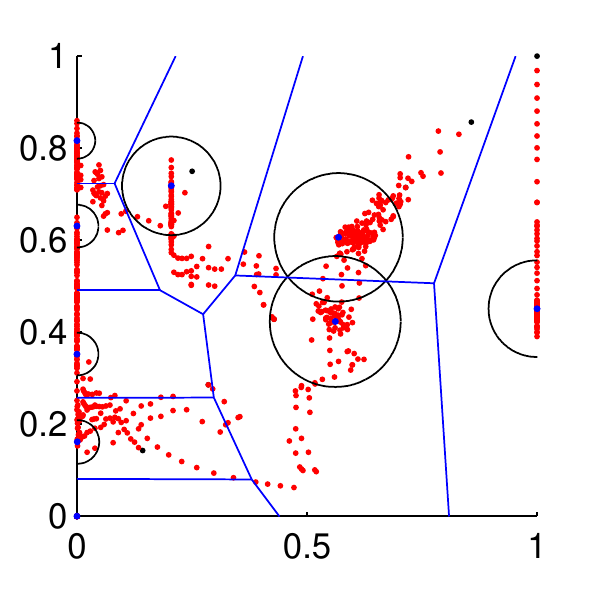}
  \includegraphics[clip=true,width=0.3 \textwidth]{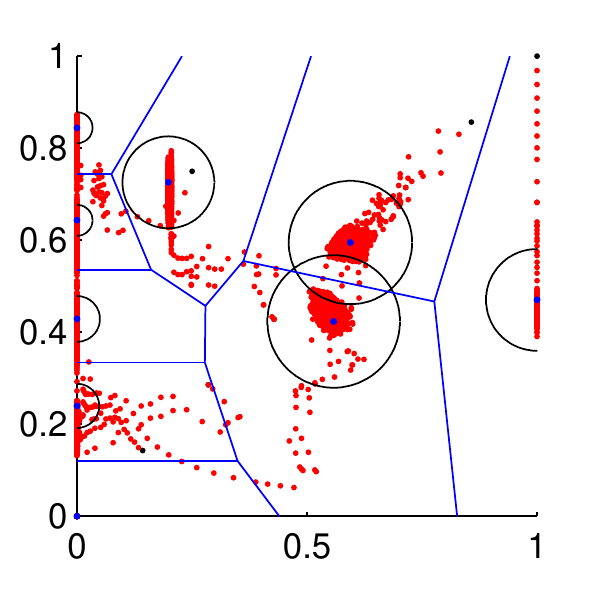}
\caption{
The evolution of $x_n$ in a unit square, considering $V$ (top)
and $W$ in the neighbourhood-based price.
In each row, from left to right, $x_n$ for $n$ equal to $k, 10k$, and $100k$ (top), 
and $n = 10k, 100k$, and $1000k$ steps (bottom), respectively.
In each plot,
we have used $k=9$ and  $s_u = 0.05$ for all $u$.
The initial position is marked in black, the final in blue, and positions inbetween in red.
The Voronoi partition corresponding to the final positions in blue, with the inscribed circles in black.
}
\label{fig:counterexample}
\end{figure*}

\section{Related Work}

The flexible car-sharing systems, allowing for one-way, open-ended journeys are
 rather novel. First mentioned ten years ago \cite{schwieger2004international,millard2005car},
 the first implementations \cite{firnkorn2011will} are only recently evaluated and
 studied in more detail.
The first rigorous study of the related optimisation problems is perhaps due to \cite{Nair2011}.
In contrast to many studies \cite{Nair2011,jorge2013comparing,boyaci2013optimization}, we do not pre-define the locations, and vary the costs.

In the context of bicycle sharing, the problem is somewhat different, but there is a larger
  body of literature available. Specifically, whereas a car can easily be parked anywhere, in principle, the bicycle often needs expensive street furniture \cite{nair2013}, and hence a pre-defined station.
  Further, whereas car-sharing schemes are often operated by profit-seeking companies,
 the bicycle sharing schemes are often run by the authorities, who want to keep prices low, 
 and often nominal.
Two of the most advanced studies \cite{nair2014a,nair2014b} 
 formulate a bi-level optimisation problem
 capturing the equilibrium of the demand and supply,  
 although the exact solution remains a challenge, computationally.
\cite{fricker2012,Fricker2014,fricker2014two} provide elaborate 
 stochastic analyses of the performance of a closed queuing system, 
 associated with bicycle rentals.
They show that without pricing and with capacity limits at the stations, 
 there are a number of stations where either bikes are not available or no further 
 bikes can be dropped off, even in a ``homogeneous'' city, which is well defined and
 in some sense the best possible.
Among other results, they also show that if a user provides his destination and the system forces
 him to take the less full out of two closest stations, the performance improves considerably. 
\cite{waserhole2013pricing} formulate a more elaborate continuous-time 
 Markov chain model, and provide a heuristic for the setting of prices for dropping the bicycle 
 at a given station, specified at pick-up time.
Part V of \cite{Pfrommer2014} suggests model predictive control for the 
 problem, where user arrives at a station to drop off his bicycle, but is offered incentives 
 for dropping the bicycle off at a different station close-by, considering an approximation of 
 the probability user would cycle further in order to gain the incentive by a linear function 
 of the incentives, independent of the distance, and there being a known desirable ``fill level'' at each station.
Incentives have also been used in Paris, where used of V{\'e}lib \cite{nair2013} get credits 
 for leaving the bicycles at up-hill stations.

\section{Conclusions}

We have proposed a pricing scheme for car-sharing applications, which
yields a desirable spread of parked cars throughout the given region,
under rather strong assumptions,
while removing privacy concerns,
inasmuch the system operator needs to know the position of the car only once
it has been dropped off. 
Overall, our analysis and simulations give quantitative guidance on how to
design a readily implementable, privacy-preserving, and efficient pricing scheme.


\bibliographystyle{IEEEtran}
\bibliography{parking}


\end{document}